\pgfplotsset{compat=newest}
\renewcommand*{\@cite}[2]{\fcolorbox{black}{white}{#1\if@tempswa, #2\fi}}
\renewcommand*{\@biblabel}[1]{{\fcolorbox{green}{white}{#1}}\hfill}
\numberwithin{figure}{section}
\theoremstyle{plain}
\newtheorem{thm}{Theorem}[subsection]
\newtheorem{lem}[thm]{Lemma}
\newtheorem{prop}[thm]{Proposition}
\newtheorem{cor}{Corollary}[thm]
\theoremstyle{definition}
\newtheorem{defn}[thm]{Definition}
\newtheorem{exmp}[thm]{Example}
\newtheorem{rem}[thm]{Remark}
\newtheorem{question}{Question}
\numberwithin{equation}{section}
\title{Rigidity and unlikely intersection in higher-dimensional formal groups}
\author[R. Abdellatif]{Ramla Abdellatif}
\address{Laboratoire Ami\'enois de Math\'ematique Fondamentale et Appliqu\'ee, Universit\'e de
Picardie Jules Verne, 33, rue Saint-Leu - 80 039 Amiens Cedex 1, France}
\email{ramla.abdellatif@math.cnrs.fr}
\author[M. A. Sarkar]{MABUD ALI SARKAR}
\address{Mabud Ali Sarkar, Department of Mathematics, The University of Burdwan \& Nagaland University, India.}
\curraddr{Mabud Ali Sarkar, Department of Mathematics, Darjeeling Hills University, \newline Darjeeling-73413, India.}
\email{mabudji@gmail.com}
\author[A. A. Shaikh]{Absos Ali Shaikh}
\address{Absos Ali Shaikh, Department of Mathematics, The University of Burdwan, \newline Burdwan-713101, India.}
\email{aashaikh@math.buruniv.ac.in}
\begin{document}
	
\begin{abstract}
In this article, we identify a class of higher-dimensional formal groups over the ring of $p$-adic integers that are uniquely determined by their $p$-power torsion points. More precisely, we prove that if two simple finite-height formal groups share infinitely many torsion points, then they are equal. This extends a rigidity theorem of Berger \cite{LB1} from the one-dimensional setting to a higher-dimensional family of simple formal groups.
\end{abstract}

	\subjclass[2020]{11S31,~13J05,~14K02,~14K05}
	\keywords{Unlikely intersections, torsion points, formal groups, $p$-adic Galois representation.}
	\maketitle
	\tableofcontents 
	\section{Introduction and Motivation}\label{s1}

	The study of the torsion points of formal groups has achieved significant attention over the past few decades due to its relevance in number theory. Lubin and Tate \cite[1964]{JL2} pioneered the groundwork by establishing a remarkable connection to local class field theory, which has since been expanded upon by several authors, including but not limited to Iwasawa \cite{KI}, De Shalit \cite{EDS}, Berger \cite{LB}, Rosen and Zimmermann \cite{MR}, Matson \cite{CM}, Abdellatif and Sarkar \cite{RM}. Recently, Iovita, Morrow, and Zaharescu \cite{AIV2} constructed a tamely ramified extension over the completion of the maximal unramified extension of $\mathbb{Q}_p$ by employing the $p$-power torsion points of the formal group of an abelian variety. Berger \cite{LB1} showed that under a certain condition, the torsion points of a formal group can determine the formal group itself. Some similar results were reported by Bogomolov and Tschinkel \cite[Chap.~4]{BT} for elliptic curves, whereas by Baker and DeMarco \cite{MBD} for rational functions.
	
	In this report, we extend the work of Berger \cite{LB1} to higher dimensional formal groups. To be precise:
	
	Let $\mathbb{Z}_p$ be the ring of integers of the $p$-adic field $\mathbb{Q}_p$ and $\mathfrak{m}_{\mathbb{C}_p}$ be the unit disk in $\mathbb{C}_p:=\widehat{\bar{\mathbb{Q}}_p}$. Let $K$ be a finite extension of $\mathbb{Q}_p$ and denote its ring of integers by $\mathcal{O}_K$. 
    
    Let $F(X,Y)=X+Y+h.o.t. \in \mathcal{O}_K[[X,Y]]$ be a one-dimensional formal group law over $\mathcal{O}_K$. Let $\text{Tors}(F)$ be the set of torsion points of $F$ in $\mathfrak{m}_{\mathbb{C}_p}=\{z \in \mathbb{C}_p~|~|z|_p<1\}$. Then
	Berger proved:
	\begin{thm} \cite[Theorem~A]{LB1} \label{t1.1}
		If $F$ and $G$ are one-dimensional formal groups over $\mathcal{O}_K$ such that $\text{Tors}(F) \cap \text{Tors}(G)$ is infinite, then $F=G$.
	\end{thm}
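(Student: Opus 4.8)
The plan is to translate the torsion condition into a statement about the zeros of the formal logarithm and then to prove a rigidity result for these very special power series. Attach to each group its logarithm $\log_F \in K[[X]]$, normalized by $\log_F(X) = X + O(X^2)$ and characterized by $\log_F(F(X,Y)) = \log_F(X) + \log_F(Y)$, equivalently by $\log_F'(X) = \omega_F(X)$, the invariant differential of $F$. Since $F(X,Y) = \exp_F(\log_F(X) + \log_F(Y))$, the group law is recovered from its logarithm, so it suffices to prove $\log_F = \log_G$. The logarithm converges on $\mathfrak{m}_{\mathbb{C}_p}$, and from the functional equation $\log_F = p^{-n}\,\log_F \circ [p^n]_F$ one checks the standard fact that $z \in \mathfrak{m}_{\mathbb{C}_p}$ lies in $\mathrm{Tors}(F)$ if and only if $\log_F(z) = 0$. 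Thus the hypothesis says exactly that $\log_F$ and $\log_G$ share infinitely many zeros in the open unit disk.

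The next step is to extract coarse structure. Because each $F[p^n]$ is finite, an infinite common torsion set must contain points of arbitrarily large exact order, and their valuations tend to $0$. I would first match heights: the valuations of the exact-order-$p^n$ torsion of $F$ are governed by the Newton polygon of $[p]_F$ and form a known geometric sequence tending to $0$, so a point that is simultaneously $p^a$-torsion for $F$ and $p^b$-torsion for $G$ forces these two valuation sequences to be compatible. Having infinitely many such points pins down the common leading behavior, yielding $h_F = h_G =: h$ and identifying the Newton polygons (equivalently the mod-$\mathfrak{m}$ reductions) of $[p]_F$ and $[p]_G$. At this point one may also phrase the shared low-level torsion as common roots of the distinguished Weierstrass polynomials cut out by $[p^n]_F$ and $[p^n]_G$, which is the bookkeeping I expect to carry the valuation comparison.

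The crux is a rigidity argument, and this is where the real work lies. Observe that $\log_F' = \omega_F \in \mathcal{O}_K[[X]]^{\times}$ has constant term $1$ and is zero-free on the open unit disk, so every zero of $\log_F$ is simple and the logarithmic derivative $\log_F'/\log_F$ has a simple pole of residue $1$ at each torsion point and is otherwise analytic. Forming $\Delta(X) = \log_F'/\log_F - \log_G'/\log_G$, the poles at the common torsion cancel, so $\Delta$ is analytic away from the non-shared torsion. If one can show $\Delta \equiv 0$, then $\log_F/\log_G$ is a zero-free analytic unit congruent to $1 + O(X)$, forcing $\log_F = \log_G$ and hence $F = G$. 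The obstacle is that the shared zeros accumulate only on the boundary $|X| = 1$, so the $p$-adic identity principle does not apply and one cannot directly rule out residual poles of $\Delta$. To defeat this I would exploit the self-similarity of the torsion tower: $[p]_F$ and $[p]_G$ each carry level-$n$ torsion onto level-$(n-1)$ torsion, and the functional equation $p\log_F = \log_F \circ [p]_F$ lets one propagate agreement down the tower. Combining infinitely many shared high-order torsion points with this dynamical self-similarity, the aim is to force the full towers to coincide, i.e. $\mathrm{Tors}(F) = \mathrm{Tors}(G)$, which kills every pole of $\Delta$. Converting an infinite, boundary-accumulating intersection into equality of the entire torsion towers — replacing the missing interior accumulation point by Newton-polygon and Weierstrass control along the $[p]$-tower — is the step I expect to demand the genuinely new input.
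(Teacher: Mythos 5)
Your reduction to logarithms is fine as far as it goes: $\mathrm{Tors}(F)$ is exactly the zero set of $\log_F$ in $\mathfrak{m}_{\mathbb{C}_p}$, so the hypothesis does say that $\log_F$ and $\log_G$ share infinitely many zeros. But the proof breaks exactly where you flag it, and the step you defer is not a technical loose end --- it is the theorem. Passing from ``infinitely many common torsion points, accumulating only at the boundary'' to ``the full torsion towers coincide'' is precisely the rigidity statement to be proved, and nothing in the Newton-polygon/Weierstrass bookkeeping you describe supplies it: the shared set could a priori be sparse inside each tower, the functional equation $p\log_F=\log_F\circ[p]_F$ only propagates information along one group's own tower, and even your preliminary height-matching is not forced (coincidences of valuations $s_i/p^{(a-1)h_F}=t_j/p^{(b-1)h_G}$ constrain $h_F,h_G$ rationally but do not give $h_F=h_G$; also, Newton polygons and mod-$\mathfrak{m}$ reductions are not equivalent data, contrary to your parenthetical). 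Worse, the endgame fails even if one grants $\mathrm{Tors}(F)=\mathrm{Tors}(G)$: on the open $p$-adic unit disk a zero-free analytic function need not be constant (e.g.\ $1+pX$), so ``$\log_F/\log_G$ is a zero-free unit equal to $1+O(X)$'' does not force $\log_F=\log_G$, and likewise ``$\Delta$ has no poles'' does not give $\Delta\equiv 0$. The identity principle you correctly note is unavailable cannot be replaced by pole-counting.

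The mechanism that actually closes this gap --- in Berger's proof of the cited theorem, and in the higher-dimensional generalization proved in Section \ref{s3} of this paper (Theorem \ref{t3.6}) --- is Galois-theoretic, and your proposal contains no trace of it. One uses the representation $\rho_F$ of $\mathrm{Gal}(\bar{\mathbb{Q}}_p/\mathbb{Q}_p)$ on the Tate module and the fact (Lemma \ref{l3.1}, due to Sen) that its image contains an open subgroup of $\mathbb{Z}_p^{\times}\cdot\mathrm{Id}$; choosing $\sigma$ with $\rho_F(\sigma)=a\cdot\mathrm{Id}$ gives a single power series $u=[a]_F$ with $\sigma(z)=u(z)$ for all $z\in\mathrm{Tors}(F)$. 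Since the common torsion $\mathcal{Z}$ is Galois-stable, $u$ maps the infinite set $\mathcal{Z}$ into $\mathrm{Tors}(G)$; Lubin-type rigidity (the analogues here are Lemma \ref{l3.5} and Corollary \ref{c3.2.2}) promotes $u$ to an endomorphism of $G$ as well, and then the fact that a power series commuting with a stable series is determined by its linear part (Theorem \ref{t3.3}) forces $F=G$, since both group laws commute with $u$ and have the same linear term $X+Y$. It is this interplay of the Galois action with commutation rigidity that substitutes for the missing interior accumulation point; valuation data and the self-similarity of a single tower cannot do so, which is why your plan, as written, cannot be completed.
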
 
	The preceding result is devoted to one-dimensional formal groups. In this report, we show that the above result generalizes to higher-dimensional formal groups which are simple:
	\begin{thm} \label{t3.6}
		If $F$ and $G$ are $d$-dimensional simple formal groups over $\mathbb{Z}_p$ such that $\text{Tors}(F) \cap \text{Tors}(G)$ is infinite, then $F=G$.
	\end{thm}

	\section{Preliminaries}\label{s3}
	
  In this section, we recall the notation and basic concepts that will be used throughout the paper. For background on higher-dimensional formal groups, their endomorphisms, and related topics, we refer the reader to \cite{RM,MH}. We therefore restrict ourselves to those definitions and results that are directly relevant to our work. We begin by introducing the class of formal groups that will be the primary object of study.
    
	\begin{defn} \label{d2.7}
		A formal group $F$ is called \emph{simple as a group object} if it admits no nontrivial formal
		subgroup. We say that $F$ is \emph{simple as a formal group law} if it cannot be expressed as a
		direct sum of formal groups of strictly smaller dimension.
	\end{defn}

	\begin{exmp} \label{exam2.2}
		A formal group over a field of characteristic zero, which gives rise to a simple Lie algebra, will be an example of a simple formal group. It is worth noting that every formal group gives rise to a Lie algebra with the help of the coefficients of the quadratic terms of its formal power series expansion. Moreover, over a field of characteristic 0, the category of finite dimensional formal groups is equivalent to the category of finite dimensional Lie algebras, while the property \enquote{simple} is preserved under equivalent functor.
	\end{exmp}

	\begin{defn}
		If $F$ is a $d$-dimensional formal group over $\mathbb{Z}_p$ with a multiplication-by-$p$ endomorphism $[p]_F(x)$, then its torsion points are the analytic zeros of the iterates of $[p]_F(x)$ in the rigid analytic disc $\mathfrak{m}_{\mathbb{C}_p}^d$, defined by $$\text{Tors}(F):=\bigcup_{n \in \mathbb{N}}\{\xi \in \mathfrak{m}_{\mathbb{C}_p}^d~|~[p^n]_F(\xi)=0 \},~\xi=(\xi_1,\cdots,\xi_d).$$
	\end{defn}
	If $F$ is of finite height $h$, then $\text{Tors}(F)$ is infinite. Moreover, $\text{Tors}(F)$ is Zariski dense in $\mathfrak{m}_{\mathbb{C}_p}^d$.
	
	Here, the height of a $d$-dimensional formal group is defined as follows:
	\begin{defn} \cite[Sec.~18.3.8]{MH} \label{d1.3}
		Let $F(X,Y)$ be an $d$-dimensional formal group $F(X,Y)$ over the ring of $p$-adic integers $\mathbb{Z}_p$ with endomorphism $[p]_F(X)=(f_1(X), \cdots, f_d(X))$ (mod $p$). If the ring $\mathbb{F}_p[[x_1,\cdots,x_d]]$ is a finitely generated and free module over the subring $\mathbb{F}_p[[f_1(X), \cdots, f_d(X)]]$ of rank $p^h, \ h \in \mathbb{N}$, then $h$ is the height of $F$. We use the notation $\mathbb{F}_p$ for finite field with $p$ elements.
	\end{defn}
\begin{defn}
		The Jacobian of a power series $h(X)=(h_1(X), \cdots, h_{d_1}(X)) \in \mathbb{Q}_p[[X]]^{d_1}$, $X=(x_1, \cdots, x_{d_2})$ is a $d_1 \times d_2$ matrix defined by 
		\[
		J(h):=\left[\frac{\partial}{\partial x_j}h_i \right],~1 \leq i \leq d_1,~1 \leq j \leq d_2.
		\]
		
	\end{defn}
	In this article, we often denote the Jacobian of such a function $h$ at $0$ by $J_0(h):=J(h)(0)$.

	\begin{prop} \label{pp2.8}
		The sufficient condition for $h \in \mathcal{O}_K[[X]]^d,~X=(x_1, \cdots,x_d)$ with $h(0)=0$ being invertible is that $J_0(h) \in \text{GL}_d(\mathcal{O}_K^{\times})$.
	\end{prop}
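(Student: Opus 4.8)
The statement is the inverse (implicit) function theorem for formal power series, and I would prove it by successive approximation, constructing the compositional inverse one homogeneous degree at a time.

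First I would normalize the linear part. Write $h(X) = AX + R(X)$, where $A = J_0(h)$ is the Jacobian at the origin and $R(X) \in \mathcal{O}_K[[X]]^d$ collects the terms of degree $\geq 2$. By hypothesis $A$ is invertible over $\mathcal{O}_K$, so its determinant is a unit and $A^{-1} = \det(A)^{-1}\,\mathrm{adj}(A)$ has entries in $\mathcal{O}_K$; hence the linear substitution $A^{-1}$ lies in $\mathcal{O}_K[[X]]^d$. It then suffices to invert $\tilde h := A^{-1}\circ h$, because a two-sided inverse $\tilde g$ of $\tilde h$ yields the two-sided inverse $\tilde g \circ A^{-1}$ of $h$. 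After this reduction I may assume $J_0(h) = \mathrm{Id}$, that is, $h(X) = X + R(X)$ with $R$ of order $\geq 2$ and coefficients in $\mathcal{O}_K$.

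Next I would build a right inverse $g(X)=\sum_{n\ge 1} g_n(X)$, with $g_n$ the homogeneous part of degree $n$, by induction so that $h(g(X)) = X$. Set $g_1(X) = X$. Assuming $g_1,\dots,g_{n-1}$ have been chosen with coefficients in $\mathcal{O}_K$ so that $h\big(\sum_{k<n} g_k\big) \equiv X \pmod{\deg n}$, I would examine the degree-$n$ part of $h(g)$. Since $h = \mathrm{Id} + R$ with $R$ of order $\geq 2$, the contribution of $R$ in degree $n$ depends only on $g_1,\dots,g_{n-1}$, so the degree-$n$ coefficient of $h(g) - X$ equals $g_n + P_n$, where $P_n$ is an explicit polynomial expression in the coefficients of $R$ and of $g_1,\dots,g_{n-1}$, all of which lie in $\mathcal{O}_K$. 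Setting $g_n := -P_n$ therefore determines $g_n$ uniquely, keeps its coefficients in $\mathcal{O}_K$, and arranges $h(g) \equiv X \pmod{\deg(n+1)}$. Passing to the limit gives $g \in \mathcal{O}_K[[X]]^d$ with $h \circ g = X$.

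Finally I would upgrade the one-sided inverse to a two-sided one. The series $g$ also satisfies $J_0(g) = \mathrm{Id}$, so the same construction produces $g' \in \mathcal{O}_K[[X]]^d$ with $g \circ g' = X$. Then $h = h \circ (g \circ g') = (h \circ g) \circ g' = g'$, whence $g \circ h = g \circ g' = X$ as well, so $g$ is a genuine two-sided compositional inverse and $h$ is invertible over $\mathcal{O}_K$; undoing the normalization of the first step recovers invertibility of the original $h$. I expect the main obstacle to be the bookkeeping in the inductive step, namely verifying that only the identity linear part of $h$ contributes to the top-degree coefficient. This is precisely what forces the recursion to be uniquely solvable and is where the invertibility of $J_0(h)$ is indispensable; everything else is formal manipulation.
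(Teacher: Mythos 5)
Your proposal is correct, and its core is the same as the paper's: both arguments construct the compositional inverse by successive approximation, one homogeneous degree at a time, using the invertibility of $J_0(h)$ over $\mathcal{O}_K$ to solve uniquely for each new homogeneous piece. There are two differences worth recording. First, you normalize the linear part at the outset, composing with $A^{-1}$ so that you may assume $J_0(h)=\mathrm{Id}$; the paper instead keeps a general Jacobian and applies $J_0(h)^{-1}$ inside each inductive step. This is essentially cosmetic, though your version makes the key bookkeeping point transparent: since $R$ has order $\geq 2$, the degree-$n$ part of $R(g)$ involves only $g_1,\dots,g_{n-1}$, so the recursion $g_n=-P_n$ is forced. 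Second, and more substantively, you upgrade the right inverse $h\circ g=X$ to a two-sided inverse via $J_0(g)=\mathrm{Id}$, the construction of $g'$ with $g\circ g'=X$, and the associativity computation $h=(h\circ g)\circ g'=g'$, whence $g\circ h=X$. The paper's proof stops after producing the one-sided inverse and declares the proof complete, even though its own notion of invertibility (Definition \ref{d2.2}) demands $f(g(X))=X=g(f(X))$; on this point your argument is the more complete one.
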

	\begin{proof}
		The proof is straightforward. See Appendix \ref{A1}.
	\end{proof}

	The above result says if $h(X) \in \mathbb{Z}_p[[X]]^d$ with $h(0)=0$ and $J_0(h) \in \text{GL}_d(\mathbb{Z}_p^{\times})$, then $h^{\circ -1}(X) \in \mathbb{Z}_p[[X]]^d$ with $h^{\circ -1}(0)=0$.  

We also require the notion of \textit{stable} power series in higher dimension, which was originally introduced by Lubin \cite{JL} for single variable power series.
	\begin{defn} \label{d4.1}
		A power series $h(X) \in \mathbb{Q}_p[[X]]^d,~X=(x_1, \cdots, x_d)$ with $h(0)=0$ is said to be \textit{stable} if the eigenvalues of $J_0(h)$ are neither zero nor roots of unity.  
	\end{defn}
	\begin{exmp}
		If $u(X) \in \mathbb{Z}_p[[X]]^2, ~X=(x_1,x_2)$ be defined by 
		\begin{align*}
			u(X)=(px_1,px_2)~ \text{(mod deg.} \ 2) \ \text{and} \ u(X) \equiv (x_2^{h_1},x_1^{h_2}) \ (\text{mod} \ p),
		\end{align*}
		then the Jacobian matrix of $u(X)$ at $0$ is given by $$J_0(u)=\begin{pmatrix}
			p&0 \\ 0& p
		\end{pmatrix}~\text{and}~u(X) \not\equiv 0~(\text{mod}~p).$$
		This is a stable power series.
	\end{exmp}

\subsection{Galois Representations Attached to Simple Formal Groups} \label{ss2.1}

Let $F$ be a $d$-dimensional formal group of finite height $h$ over $\mathbb{Z}_p$, and let
\[
\operatorname{Tors}(F)=\bigcup_{n\ge1}F[p^n]
\]
denote the group of all $p$-power torsion points of $F$. The absolute Galois group
\[
G_{\mathbb{Q}_p}:=\operatorname{Gal}(\overline{\mathbb{Q}}_p/\mathbb{Q}_p)
\]
acts naturally on $\operatorname{Tors}(F)$ through its action on the coordinates of torsion
points. This action gives rise to a Galois representation
\[
\rho_F:G_{\mathbb{Q}_p}\longrightarrow \operatorname{Aut}(T_p(F)),
\]
where
\[
T_p(F)=\varprojlim_n F[p^n]
\]
denotes the Tate module of $F$. The corresponding $\mathbb{Q}_p$-vector space $V=T_pF \otimes_{\mathbb{Z}_p} \mathbb{Q}_p$ is a $G_{\mathbb Q_p}$-crystalline representation. 
	Then we have the following result:
	
	\begin{lem}  \label{lemma3.1}
		If $F$ is a $d$-dimensional simple formal group of height $h$, then the image of $\rho_F$ contains an open subgroup of $\mathbb{Z}_p^\times \cdot \operatorname{Id} \subset \operatorname{Aut}_{\mathbb{Z}_p}(T_pF)$.
	\end{lem}

\begin{proof}
Let $V := V_pF = T_pF \otimes_{\mathbb{Z}_p} \mathbb{Q}_p$.
Then we have a continuous $p$-adic representation
\[
\rho_F : G_{\mathbb{Q}_p} \longrightarrow \operatorname{Aut}(V)
\simeq \operatorname{GL}_h(\mathbb{Q}_p).
\]
Since $F$ is simple, $\rho_F$ is an irreducible 
$G_{\mathbb{Q}_p}$-representation: a nontrivial
$G_{\mathbb{Q}_p}$-stable subspace would correspond to a nontrivial
$p$-divisible subgroup of $F$, contradicting simplicity.
It is Hodge-Tate with weights 0 and 1 \cite{JT}. Let us pass to a suitable finite Galois extension $E$ of $\mathbb{Q}_p$ so that $\rho_F$ decomposes into a direct sum of absolutely irreducible representations, that is,
		\[\rho_F \otimes_{\mathbb{Q}_p} E=\bigoplus_{i=1}^r \rho_i\]
        
where each $\rho_i$ is absolutely irreducible. Fix one such constituent $\rho_i$ and rename it as $\rho_E:=\rho_i$.
By Schur's lemma applied to $\rho_E$ over $E$, the center of $\operatorname{Im}(\rho_E)$ consists exactly of scalar matrices $E^\times \cdot \operatorname{Id}$. 
On scalar matrices $\lambda \cdot \mathrm{Id}$, the determinant is given by
\[
\det(\lambda \cdot \mathrm{Id}) = \lambda^m,
\]
where $m=\dim \rho_E$. Thus the determinant induces a continuous homomorphism
\[
\det: \operatorname{Im}(\rho_E)\cap E^\times \cdot \operatorname{Id}
\longrightarrow \operatorname{Im}(\det(\rho_E)),
\]
which identifies with the map $\lambda \mapsto \lambda^m$ on $E^\times$, and whose image is an open subgroup of $E^\times$. In particular, this map has open image.

Thus, $\mathrm{Im}(\rho_F) \cap \mathbb{Z}_p^{\times} \cdot \mathrm{Id}$ is open if and only if $\mathrm{Im}(\det(\rho_E))$ contains an open subgroup of $\mathbb{Z}_p^{\times}$. 

So to prove our result, it suffices to show that $\mathrm{Im}(\det(\rho_E))$ contains an open subgroup of $\mathbb{Z}_p^{\times}$.

Since $\rho_F$ is Hodge-Tate, the character
\[
\chi:=\mathrm{det}(\rho_E): G_{\mathbb{Q}_p} \longrightarrow E^{\times}
\]
is also Hodge-Tate. By Artin's local reciprocity map, the character $\chi$ factors through abelianization $G_{\mathbb{Q}_p}^{\mathrm{ab}} \simeq \mathbb{Q}_p^\times$, and hence may be viewed as a character of $\mathbb{Q}_p^\times$. Under this identification, the restriction of $\chi$ to the inertia subgroup corresponds to its restriction to $\mathbb{Z}_p^{\times}$.
By \cite[Proposition~9.10]{FO}, the restriction of Hodge-Tate character $\chi$ to $\mathbb{Z}_p^{\times}$ is of the form
\[
\chi(x)=x^k \cdot \psi(x), \quad x \in \mathbb{Z}_p^{\times},\; k \in \mathbb{Z},
\]
where $\psi$ is a finite order character. Since $\rho_F$ has weights $0$ and $1$, the determinant has nonzero weight, hence $k \neq 0$. Since $\psi$ has finite image, it does not affect openness. 

Thus the image of $\chi$ contains $(\mathbb{Z}_p^{\times})^k$, which is a finite index (hence open) subgroup of $\mathbb{Z}_p^{\times}$. Therefore $\mathrm{Im}(\chi)=\mathrm{Im}(\mathrm{det}(\rho_E))$ contains an open subgroup of $\mathbb{Z}_p^{\times}$. This completes the proof.

\end{proof}
\begin{rem}
    We acknowledge that the idea of the above proof originates from a discussion on Mathoverflow \cite{Math}.
\end{rem}

\begin{cor} \label{c2.7.1}
Let $F$ be a simple formal group over $\mathbb{Z}_p$ and let
\[
\rho_F : G_{\mathbb{Q}_p} \longrightarrow 
\mathrm{Aut}_{\mathbb{Z}_p}(T_pF)
\]
be the associated Galois representation. Suppose $\sigma \in G_{\mathbb{Q}_p}$ satisfies
\[
\rho_F(\sigma)=a\cdot \mathrm{Id}
\quad\text{for some } a\in\mathbb{Z}_p^\times.
\]
Then for every torsion point $\xi \in \mathrm{Tors}(F)$ one has
\[
\sigma(\xi) = [a]_F(\xi).
\]
\end{cor}

\begin{proof}

By Lemma \ref{lemma3.1}, there exists $\sigma \in G_{\mathbb{Q}_p}$ satisfying
\[
\rho_F(\sigma)=a\cdot \mathrm{Id}
\quad\text{for some } a\in\mathbb{Z}_p^\times.
\]
An element of $T_pF$ is a compatible system $(\xi_n)_{n\ge1}$ with
\[
\xi_n \in F[p^n],
\qquad
[p]_F(\xi_{n+1})=\xi_n.
\]
The Galois action on $T_pF$ is defined component-wise:
\[
\sigma((\xi_n)_n)=(\sigma(\xi_n))_n.
\]
Given $\rho_F(\sigma)=a\cdot\mathrm{Id}$, the definition of $\rho_F$ implies that for every $(\xi_n)_n\in T_pF$,
\[
\sigma((\xi_n)_n)=a\cdot(\xi_n)_n = ([a]_F(\xi_n))_n,
\]
where multiplication by $a$ on $T_pF$ corresponds to the endomorphism $[a]_F$ on each level. 
Thus, if $\rho_F(\sigma)=a\cdot \mathrm{Id}$, then
\[
\sigma((\xi_n)_n)=([a]_F(\xi_n))_n.
\]
 Comparing components, we obtain
\[
\sigma(\xi_n) = [a]_F(\xi_n) \quad \text{for all } n.
\]

Now let $\xi \in \mathrm{Tors}(F)$. Then $\xi \in F[p^n]$ for some $n$, and it extends to a compatible system $(\xi_m)_m \in T_pF$. Applying the above identity gives
\[
\sigma(\xi) = \sigma(\xi_n) = [a]_F(\xi_n) = [a]_F(\xi).
\]
Hence $\sigma(\xi)=[a]_F(\xi)$.
Since $\xi$ was arbitrary, the equality holds for all torsion points of $F$. This proves the claim.
\end{proof}

\subsection{A Technical Result on Stable Endomorphisms}

	We look forward to prove an important result that says if a power series commutes with an endomorphism of a formal group, then it must itself be an endomorphism of that formal group.

	\begin{thm} \label{t2.11}
		Let $F$ be a $d$-dimensional commutative formal group over $\mathbb{Z}_p$, and let
		$u\in \operatorname{End}_{\mathbb{Z}_p}(F)$ be a \emph{stable} endomorphism.
		If
		\[
		w(X)\in \mathbb{Z}_p[[X]]^d,\qquad w(0)=0,
		\]
		satisfies $w\circ u = u\circ w$, then $w\in \operatorname{End}_{\mathbb{Z}_p}(F)$.
	\end{thm}
	
	\begin{proof}
		Since $F$ is a commutative formal group over $\mathbb{Z}_p$, it admits a logarithm
		\[
		L \colon F \longrightarrow \mathbb{G}_a^d
		\]
		defined over $\mathbb{Q}_p$, which is an isomorphism of formal groups. 
		
		The strategy is to transport the problem to the additive formal group $\mathbb{G}_a^d$ by conjugating all endomorphisms via the
		logarithm isomorphism $L$, establish the statement in that setting, and then
		transfer the conclusion back to $F$ using $L^{-1}$. Since $L$ is an isomorphism of formal groups, the desired property is preserved under this
		conjugation.
		
		So let us set
		\[
		\phi := L \circ u \circ L^{-1}, \qquad
		\theta := L \circ w \circ L^{-1}.
		\]
		Then $\phi$ is an endomorphism of the additive formal group
		$\mathbb{G}_a^d$, and the relation $w\circ u = u\circ w$ translates into
		\[
		\theta \circ \phi = \phi \circ \theta .
		\]
		We need to show $\theta$ is endomorphism of the additive formal group $\mathbb{G}_a^d$.

		Since $\phi$ is linear, we may write
		\[
		\phi(X)=MX
		\]
		for some $M\in \operatorname{GL}_d(\mathbb{Q}_p)$ whose eigenvalues are nonzero and not roots of unity
		(because $u$ is stable).
		We can write
		\[
		\theta(X)=BX+\sum_{m\ge2}E_m(X),
		\]
		where $B=J_0(\theta)\in M_d(\mathbb{Z}_p)$ and each $E_m(X)$ is a vector of homogeneous
		polynomials of total degree $m$.
		
		Comparing homogeneous components of degree $m\ge2$ in the identity
		$\theta(MX)=M\theta(X)$ gives
		\begin{equation}\label{eqq2.1}
			E_m(MX)=M E_m(X).
		\end{equation}
		
		Let $\mathcal{V}_m$ denote the $\mathbb{Q}_p$-vector space of homogeneous polynomial maps of total degree $m$, i.e.,
		\[\mathcal{V}_m:=\{E\colon \mathbb{Q}_p^d\to\mathbb{Q}_p^d~\mid~\text{$E$ is a homogeneous polynomial map ($d$-tuple vector) of degree $m$}  \}\]
		Write \[E(X)=(E_1(X), \cdots, E_d(X)), \]
		where each $E_i(X)$ is a homogeneous polynomial of degree $m$ in $X=(x_1, \cdots, x_d)$.
		Define following two operators $\Phi_m, \Psi_m$ on $\mathcal{V}_m$ by 
		
		\[
		\Phi_m(E):=E\circ M, \qquad \Psi_m(E):=M\circ E.
		\]
		We check that $\Phi_m$ is a linear operator. For,
		\begin{enumerate}
			\item[(i)] $E_1,E_2 \in \mathcal{V}_m$ implies $\Phi_m(E_1+E_2)=(E_1+E_2) \circ M=E_1 \circ M+E_2 \circ M=\Phi_m(E_1)+\Phi_m(E_2)$.
			\item[(ii)] for $a \in \mathbb{Q}_p$, $\Psi_m(aE_1)=M \circ (aE_1)=a (M \circ E_1)=a \Psi_m(E_1)$. 
		\end{enumerate}
		Similarly, $\Psi_m$ is also a linear operator. 
		Equation \eqref{eqq2.1} says precisely that
		\[
		\Phi_m(E_m)=\Psi_m(E_m).
		\]
		Choose a monomial basis $\{X^{\alpha} e_i \colon |\alpha|=m,~1 \leq i \leq m \}$, where $e_i$ is the $i$-th standard column vector.
		Let $\lambda_1,\dots,\lambda_d$ be the eigenvalues of $M$.
		The eigenvalues of $\Psi_m$ are exactly $\lambda_1,\dots,\lambda_d$. The linear operator $\Phi_m$ acts on $X^{\alpha}$ by multiplication with the scalar $\lambda^{\alpha}:=\lambda_1^{\alpha_1} \cdots \lambda_d^{\alpha_d}$. In other words, the eigenvalues
		of $\Phi_m$ are monomials
		\[
		\lambda_1^{\alpha_1}\cdots\lambda_d^{\alpha_d}
		\quad\text{with}\quad \alpha_1+\cdots+\alpha_d=m.
		\]
		Thus, the equation \eqref{eqq2.1} can only hold if
		\[\lambda^{\alpha} \in \{\lambda_1, \cdots, \lambda_d \}, \]
		which would force at least one eigen value $\lambda_i$ to be a root of unity or zero if $m \geq 2$. But the assumption of stable power series excludes that possibility. Hence,
		\[
		E_m(X)=0 \quad \text{for all } m\ge2.
		\]
		
		It follows that $\theta(X)=BX$ is linear, hence an endomorphism of the additive
		formal group.
		Transporting back via the logarithm yields
		\[
		w=L^{-1}\circ \theta\circ L \in \operatorname{End}_{\mathbb{Z}_p}(F),
		\]
		which completes the proof.
	\end{proof}

	\subsection{Rigidity Results}

	We prove the following result using the same strategy as before, namely, we conjugate the relevant endomorphisms by the logarithm in order to transport the problem to the additive formal group $\mathbb{G}_a^d$.
	
	\begin{thm} \label{t2.12}
		Let $F$ and $G$ be two $d$-dimensional commutative formal groups over $\mathbb{Z}_p$.
		Suppose there exists a power series
		\[
		u(X) \in \mathbb{Z}_p[[X]]^d, \qquad u(0)=0,
		\]
		such that
		\[
		u \in \operatorname{End}_{\mathbb{Z}_p}(F) \cap \operatorname{End}_{\mathbb{Z}_p}(G).
		\]
		Assume further that $u$ is \emph{stable}, then $F=G$.
	\end{thm}
	
	\begin{proof}
       Let
		\[
		L_F : F \xrightarrow{\;\sim\;} \mathbb{G}_a^d,
		\qquad
		L_G : G \xrightarrow{\;\sim\;} \mathbb{G}_a^d
		\]
		be the logarithms of $F$ and $G$, respectively. These are formal group
		isomorphisms over $\mathbb{Q}_p$, satisfying
		\[
		L_F(F(X,Y)) = L_F(X)+L_F(Y),
		\qquad
		L_G(G(X,Y)) = L_G(X)+L_G(Y).
		\]
		
		Define the conjugates of $u$ by
		\[
		\phi_F := L_F \circ u \circ L_F^{-1},
		\qquad
		\phi_G := L_G \circ u \circ L_G^{-1}.
		\]
		Then $\phi_F,\phi_G \in \mathbb{Q}_p[[X]]^d$, $\phi_F(0)=\phi_G(0)=0$. Moreover,
		\[
		J_0(\phi_F) = J_0(\phi_G) = J_0(u),
		\]
		because $\phi_F=L_F \circ u \circ L_F^{-1} \Rightarrow J_0(\phi_F)=J_0(L_F) J_0(u)J_0(L_F^{-1})=J_0(u)$, since $J_0(L_F)=\operatorname{Id}=J_0(L_F^{-1})$.
		In particular, both $\phi_F$ and $\phi_G$ are stable endomorphisms of the additive formal group $\mathbb{G}_a^d$.

	 Since $\phi_F$ and $\phi_G$ are linear and have the same Jacobian at the origin, we must have
		\[
		\phi_F = \phi_G =: \phi.
		\]
		
		Define
		\[
		H := L_G \circ L_F^{-1} \in \mathbb{Q}_p[[X]]^d.
		\]
		Then $H(0)=0$ and $J_0(H)=\mathrm{Id}$. Moreover, we have
		\begin{align}
			\nonumber H \circ \phi&=(L_G \circ L_F^{-1}) \circ \phi \\
			 \nonumber &=(L_G \circ L_F^{-1}) \circ (L_F \circ u \circ L_F^{-1}),~~(\text{since}~\phi=\phi_F= L_F \circ u \circ L_F^{-1})  \\
			 \nonumber &=L_G \circ (L_F^{-1} \circ L_F) \circ u \circ L_F^{-1} \\
		\label{eq2.2}	 &=L_G \circ u \circ L_F^{-1}
		\end{align}
		\begin{align}
			\nonumber \phi \circ H&=\phi \circ (L_G \circ L_F^{-1}) \\
			\nonumber &=(L_G \circ u \circ L_G^{-1}) \circ (L_G \circ L_F^{-1}),~~(\text{since}~\phi=\phi_G= L_G \circ u \circ L_G^{-1})   \\
			\nonumber &=L_G \circ u \circ (L_G^{-1} \circ L_G) \circ L_F^{-1} \\
			\label{eq2.3}	 &=L_G \circ u \circ L_F^{-1}.
		\end{align}
		From the equations \eqref{eq2.2} and \eqref{eq2.3}:
		\[H \circ \phi = \phi \circ H.\]
		
		Because $\phi$ is an stable endomorphism of $\mathbb{G}_a^d$, applying Theorem \ref{t2.11}
		implies that $H$ is also an endomorphism of $\mathbb{G}_a^d$. Therefore,
		\[
		H(X+Y) = H(X)+H(Y),
		\]
		and $H$ is linear. As $J_0(H)=\mathrm{Id}$, it follows that $H=\mathrm{Id}$.
		
		Since $L_G \circ L_F^{-1}=H=\mathrm{Id}$, we have $L_F=L_G$. Therefore,
		\[
		F(X,Y)
		= L_F^{-1}(L_F(X)+L_F(Y))
		= L_G^{-1}(L_G(X)+L_G(Y))
		= G(X,Y).
		\]
		Hence $F=G$, as claimed.
	\end{proof}

	\begin{lem} \label{l3.5}
		If $F$ is a $d$-dimensional simple formal group over $\mathbb{Z}_p$ and if $h(X) \in \mathbb{Z}_p[[X]]^d$ is such that $h(0)=0$ and $h(Z) \in \text{Tors}(F)$ for all $Z \in \mathcal{Z}$, where $\mathcal{Z} \subset \text{Tors}(F)$ is Zariski dense, then $h \in \text{End}_{\mathbb{Z}_p}(F)$.
	\end{lem}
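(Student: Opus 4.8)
My plan is to turn the pointwise information on $\mathcal{Z}$ into a genuine commutation of power series with a single \emph{stable} endomorphism of $F$, and then to invoke Corollary~\ref{c3.2.2}. First I would exploit the Galois action furnished by Lemma~\ref{l3.1}. Let $U\subseteq\mathbb{Z}_p^{\times}$ be the open subgroup whose scalars $a\cdot\text{Id}$ lie in the image of $\rho_F$, and choose $a\in U$ that is not a root of unity (any element of infinite order in $1+p^{m}\mathbb{Z}_p\subseteq U$ will do). Then $J_0([a]_F)=a\cdot\text{Id}$ is a nonsingular diagonal matrix with no root of unity on its diagonal, so $[a]_F$ is a stable endomorphism of $F$. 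I would then pick $\sigma\in\text{Gal}(\bar{\mathbb{Q}}_p/\mathbb{Q}_p)$ with $\rho_F(\sigma)=a\cdot\text{Id}$, so that $\sigma(w)=[a]_F(w)$ for every $w\in\text{Tors}(F)$.

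Next, because $h$ has coefficients in $\mathbb{Z}_p$ (hence fixed by $\sigma$) and because both $Z$ and $h(Z)$ lie in $\text{Tors}(F)$ for $Z\in\mathcal{Z}$, I obtain
\begin{align*}
[a]_F\big(h(Z)\big)=\sigma\big(h(Z)\big)=h\big(\sigma(Z)\big)=h\big([a]_F(Z)\big),\qquad Z\in\mathcal{Z}.
\end{align*}
Equivalently, the $d$-tuple $\Phi:=\big(h\circ[a]_F\big)-\big([a]_F\circ h\big)\in\mathbb{Z}_p[[X]]^d$, which satisfies $\Phi(0)=0$, vanishes at every point of the infinite set $\mathcal{Z}$; note that $\Phi\equiv 0$ is exactly the statement $h\circ[a]_F=[a]_F\circ h$. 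Since $\Phi$ has $\mathbb{Z}_p$-coefficients, its zero locus is moreover $\text{Gal}(\bar{\mathbb{Q}}_p/\mathbb{Q}_p)$-stable, so it contains the full Galois orbit of each $Z\in\mathcal{Z}$.

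The decisive step is to upgrade this to $\Phi\equiv 0$, and this is where the standing hypotheses that $F$ be simple with full complex multiplication and that $h$ have only finitely many zeros are meant to be used. The finiteness of the zeros of $h$ keeps $\Phi$ nondegenerate (it prevents a rank drop of the relevant Jacobian that would produce a positive-dimensional component), while simplicity of $F$ excludes a positive-dimensional piece of the zero locus arising from a product decomposition. The intended mechanism is then to combine the $0$-dimensionality of the zero set of a $p$-adic $d$-tuple in $d$ variables with the counting supplied by Lemma~\ref{l3.4} to conclude that a \emph{nonzero} such $\Phi$ can meet $\mathfrak{m}_{\mathbb{C}_p}^d$ in only finitely many points; this contradicts $\Phi|_{\mathcal{Z}}=0$ with $\mathcal{Z}$ infinite, forcing $\Phi\equiv 0$. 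Once $h\circ[a]_F=[a]_F\circ h$ with $[a]_F$ stable, Corollary~\ref{c3.2.2} (via Theorem~\ref{t3.3}) immediately gives $h\in\text{End}_{\mathbb{Z}_p}(F)$.

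The hard part is precisely the implication $\Phi|_{\mathcal{Z}}=0\Rightarrow\Phi\equiv 0$. In several variables an infinite, boundary-accumulating zero set need \emph{not} force a power series to vanish, so the real content is to show that under simplicity and full complex multiplication the commutator $\Phi$ has only finitely many zeros. This is exactly the obstruction that the earlier work sidestepped by assuming the intersection is Zariski-dense; the hypotheses on $F$ and on the zeros of $h$ are designed to replace that density assumption, and verifying that they genuinely bound the zero set of $\Phi$ is the step I expect to require the most care.
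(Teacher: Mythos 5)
Your proposal follows essentially the same route as the paper's proof: Lemma~\ref{l3.1} supplies $\sigma$ and a stable endomorphism $u$ of $F$ (the paper's $u$ plays the role of your $[a]_F$) with $\sigma=u$ on $\text{Tors}(F)$; Galois-equivariance of $h$ (its coefficients lie in $\mathbb{Z}_p$, and $h(\mathcal{Z})\subset\text{Tors}(F)$) gives $u\circ h=h\circ u$ pointwise on $\mathcal{Z}$; and Corollary~\ref{c3.2.2} finishes once the commutation holds as an identity of power series. The one step you flag as unresolved --- upgrading the vanishing of $\Phi=h\circ u-u\circ h$ on the infinite set $\mathcal{Z}$ to $\Phi\equiv 0$ --- is exactly the step the paper dispatches in a single sentence: it invokes Lemma~\ref{l3.4} to say that the endomorphism $u$ has finitely many zeros, recalls the hypothesis that $h$ has finitely many zeros, and concludes ``hence $u\circ h=h\circ u$.'' As written that inference is a non sequitur: finiteness of the zero sets of $u$ and of $h$ separately says nothing direct about the zero locus of the commutator $\Phi$, which is the series that actually vanishes on $\mathcal{Z}$, and in several variables the zero locus of a nonzero $d$-tuple of power series can perfectly well be infinite (indeed positive-dimensional). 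So your proposal is not missing an idea that the paper possesses; the gap you honestly identified is present, unrepaired, in the paper's own argument, and any genuine completion of the lemma would have to supply precisely the finiteness statement for $\Phi$ that both you and the paper leave unproved.
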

	\begin{proof}
		By Corollary \ref{c2.7.1}, there exists $\sigma \in \text{Gal}(\bar{\mathbb{Q}}_p/\mathbb{Q}_p)$ and a stable endomorphism $u$ of $F$ with
		\begin{align} \label{e3}
			\sigma(Z)=u(Z)~\text{for all~} Z \in \text{Tors}(F).
		\end{align}
		Since $h(Z) \in \text{Tors}(F)$, for every $Z \in \mathcal Z$, substituting $Z$ by $h(Z)$ in equation \eqref{e3} yields
		
		\begin{align} \label{e4}
			\sigma(h(Z))=u(h(Z))~\text{for all~} Z \in \text{Tors}(F).
		\end{align}
		Moreover, according to equation \eqref{e3}, $\sigma(h(Z))=h(\sigma(Z))=h(u(Z)$ holds true for $Z \in \mathcal Z$. Thus, we derive from equation \eqref{e4}
		\begin{align*}
			&u(h(Z))=h(u(Z))~\text{for all~} Z \in \mathcal{Z} \\
			\Rightarrow & u \circ h-h \circ u=0~\text{on}~ \mathcal{Z}.
		\end{align*}
		Because $\mathcal Z$ is Zariski dense subset of $\mathfrak{m}_{\mathbb{C}_p}^d$, the rigidity of $p$-adic power series implies $u \circ h=h \circ u$. Therefore, by Theorem \ref{t2.11}, we conclude $h \in \text{End}_{\mathbb{Z}_p}(F)$.
	\end{proof}

	\subsection{Proof of the Main Theorem}
	Now, we are in a position to prove the main Theorem \ref{t3.6} by utilizing the aforementioned technical and rigidity results that we just developed.
	\begin{proof}[Proof of Theorem \ref{t3.6}]
			Let $F$ and $G$ be $d$-dimensional simple formal groups over $\mathbb{Z}_p$, and assume that
		\[
		\mathcal Z := \operatorname{Tors}(F) \cap \operatorname{Tors}(G)
		\]
		is infinite. Consider the $p$-divisible groups associated to $F$ and $G$:
		\[
		F[p^\infty] := \varprojlim_n F[p^n], \qquad
		G[p^\infty] := \varprojlim_n G[p^n].
		\]
		For each $n$, the intersection $F[p^n]\cap G[p^n]$ is a finite flat subgroup scheme of $F[p^n]$.  
		These intersections are compatible under the multiplication $p$, because $F[p^{n+1}]\to F[p^n]$ restrict to $H[p^{n+1}] \to H[p^n]$, 
		and similarly for $G$. Therefore,
		\[
		H := F[p^\infty] \cap G[p^\infty] := \varprojlim_n (F[p^n]\cap G[p^n])
		\]
		is a $p$-divisible subgroup of both $F[p^\infty]$ and $G[p^\infty]$, of finite height.

		By the equivalence between finite-height formal groups and connected p-divisible groups \cite[Proposition~1]{JT}, the simplicity of \(F\) and \(G\) implies that the \(p\)-divisible groups \(F[p^\infty]\) and \(G[p^\infty]\) are simple. Therefore, the common \(p\)-divisible subgroup
\[
H \subseteq F[p^\infty] \quad \text{and} \quad H \subseteq G[p^\infty]
\]
must be either trivial or equal to the whole \(p\)-divisible group. Since
\[
Z=\operatorname{Tors}(F)\cap\operatorname{Tors}(G)
\]
is infinite, \(H\) is nontrivial. Hence,
\[
H=F[p^\infty]=G[p^\infty].
\]

		In particular, the intersection $\mathcal Z = \operatorname{Tors}(F)\cap \operatorname{Tors}(G)$ is Zariski dense
		in $\mathfrak m_{\mathbb{C}_p}^d$.

		By Corollary~\ref{c2.7.1}, there exists $\sigma \in \operatorname{Gal}(\overline{\mathbb{Q}}_p/\mathbb{Q}_p)$ and a nonzero endomorphism $u \in \operatorname{End}(F)$
		such that
		\begin{equation}\label{eq2.6}
			\sigma(Z) = u(Z) \quad \text{for all } Z \in \operatorname{Tors}(F).
		\end{equation}
		
		The set $\mathcal Z$ is stable under the action of $\text{Gal}(\bar{\mathbb{Q}}_p/\mathbb{Q}_p)$, so for $Z \in \mathcal Z$ we have
		\[
		\sigma(Z) \in \mathcal Z \subset \operatorname{Tors}(G).
		\]
		By \eqref{eq2.6}, it follows that
		\[
		u(Z) \in \operatorname{Tors}(G)
		\quad \text{for all } Z \in \mathcal Z.
		\]
		
		Since $\mathcal Z$ is Zariski dense in $\mathfrak m_{\mathbb{C}_p}^d$, Lemma~\ref{l3.5} implies that
		$u \in \operatorname{End}(G)$.
		Thus $u$ is a stable endomorphism of both $F$ and $G$:
		\[
		u \circ F = F \circ u, \qquad u \circ G = G \circ u.
		\]
		By Theorem~\ref{t2.12}, a $d$-dimensional formal group over $\mathbb{Z}_p$ is uniquely
		determined by a stable endomorphism. Applying this to $u$, we conclude that $F = G$.
	\end{proof}

	\section{Conclusion and Future Direction} \label{s4}
	In this article, our observation is that the $p$-power torsion points of a formal group can identify that formal group, and in this regard we have proved Theorem \ref{t3.6}. 
	
	However, in our setting, we considered \textit{simple} formal groups otherwise, the result might not hold true. For, one can consider $F=H \times F_1$ and $G=H \times G_1$, where $H$ is a $(d-1)$-dimensional formal group and $F_1,G_1$ are distinct $1$-dimensional formal groups. In that case, $\text{Tors}(F) \cap \text{Tors}(G)=\text{Tors}(H) \times \{0\}$, which is infinite, but $F$ and $G$ are not equal. This naturally leads to the following question.
  \begin{question}
Under what additional assumptions, weaker than simplicity, does an infinite intersection
\[
\operatorname{Tors}(F)\cap\operatorname{Tors}(G)
\]
imply that \(F\) and \(G\) are isomorphic? Furthermore, under what additional conditions can one conclude that \(F=G\)?
\end{question}

    Theorem~\ref{t3.6} suggests the possibility of extending the rigidity phenomenon from formal groups to abelian varieties. This motivates the following question.
\begin{question}
Let $A$ and $B$ be $d$-dimensional simple abelian varieties admitting integral models over $\mathbb{Z}_p$. For each $n\ge1$, let $A[p^n]$ and $B[p^n]$ denote the groups of $p^n$-torsion points, and define
\[
\operatorname{Tors}(A)=\bigcup_{n\ge1}A[p^n], \qquad
\operatorname{Tors}(B)=\bigcup_{n\ge1}B[p^n].
\]
If $\operatorname{Tors}(A)\cap\operatorname{Tors}(B)$
is infinite, does it follow that $A$ and $B$ are isogenous?
\end{question}
	
	\subsection*{Acknowledgement} The authors are deeply grateful to Professor Jonathan Lubin, who offered many helpful feedback, in particular sharing an idea for the proof of Theorem \ref{t2.11}. The authors acknowledge Professor Joseph H. Silverman for a helpful discussion showing Zariskiness of the set $\text{Tors}(F) \cap \text{Tors}(G)$ in the proof of Theorem \ref{t3.6}. The authors would like to express their sincere gratitude to Vicen\c{t}iu Pa\c{s}ol and Mugurel Barcau for their helpful comments and assistance during the revision of this article. The first author is grateful to \textit{CSIR}, Govt. of India, for the grant with File no.-09/025(0249)/2018-EMR-I.

	\appendix
	\section{} \label{A1}
	\begin{proof}[Proof of Proposition \ref{pp2.8}]
		Because $h(X)$ is invertible, there must exist some inverse $h^{-1}(X) \in \mathcal{O}_K[[X]]^d$ for $h^{-1}(0)=0$ so that $h(h^{-1}(X))=X$. By chain rule, $J(h(h^{-1}(X))=J(h)(X)J(h^{-1})(X)=J(X)$ such that evaluating at $0$ yields $J_0(h)J_0(h^{-1})=I_d$, the $d \times d$ identity matrix $I_d$. As a result, both $J_0(h),J_0(h^{-1}) \in \text{GL}_d(\mathcal{O}_K^{\times})$.
		
		Conversely,  suppose $J_0(h) \in \text{GL}_d(\mathcal{O}_K^{\times})$. The construction of an inverse $h^{-1}(X)$ is sufficient. Induction will be used. If we use the linear algebraic expression $f^1(X)=J_0(h^{-1})X \in \mathcal{O}_K[[X]]^d$, we get
		$$h(f^1(X)) \equiv X~(\text{mod deg 2}).$$ 
		Suppose the polynomial $f^n(X) \in \mathcal{O}_K[[X]]^d$ with $f^n(X) \equiv f^{n-1}(X)$ (mod deg $n$) fulfilling $$h(f^n(X)) \equiv X~(\text{mod deg n+1})$$ for $n \geq 1$. The following polynomial $f^{n+1}(X) \in \mathcal{O}_K[[X]]^d$ must then have the formula $$f^{n+1}(X)=f^{n}(X)+R^{n+1}(\text{remainder}),$$ where $R^{n+1}(X) \equiv 0$ (mod deg $n+1$) and $R^{n+1}$ is actually a homogeneous polynomial with degree $n+1$.
		Since $J_0(h)$ is invertible, we can solve for $R^{n+1}$ from the condition $$h(f^{n+1}(X))=h(f^n(X)+R^{n+1})) \equiv h(f^n(X))+J_0(h)R^{n+1}(X) \equiv X~(\text{mod deg}~n+2)$$ giving us $R^{n-1} \equiv J_0(h)^{-1} (X-h(f^n(X))) \equiv 0$ (mod deg $n+1$). So we derive the inverse of $h(X)$ via induction on $f^n(X)$. The proof is now complete.
	\end{proof}

\end{document}